\documentclass{birkjour}
\usepackage{amscd,amssymb,hyperref,mathtools,url}
\usepackage[
backend=biber,
style=ieee,
sorting=ynt
]{biblatex}
\usepackage{datetime}
\usepackage[all]{xy}
\usepackage{graphicx}
\usepackage{enumerate}
\usepackage{verbatim}
\usepackage{wrapfig}
\usepackage{siunitx}
\usepackage{biblatex}
\sisetup{output-exponent-marker=\ensuremath{\mathrm{e}}}

\newtheorem{theorem}{Theorem}[section]
\newtheorem{corollary}[theorem]{Corollary}
\theoremstyle{definition}
\newtheorem{definition}[theorem]{Definition}

\newtheorem{lemma}[theorem]{Lemma}
\newtheorem*{remark}{Remark}

\newtheorem*{example}{Example}
\numberwithin{equation}{section}

\newcommand{\R}{\mathbb{R}}

\newcommand{\be}{\begin{enumerate}}
\newcommand{\ee}{\end{enumerate}}
\newcommand{\bi}{\begin{itemize}}
\newcommand{\ei}{\end{itemize}}

\newcommand{\ba}{\begin{array}}
\newcommand{\ea}{\end{array}}
\newcommand{\bmat}{\left[\begin{array}}
\newcommand{\emat}{\end{array}\right]}
\newcommand{\bt}{\begin{thm}}
\newcommand{\et}{\end{thm}}
\newcommand{\bp}{\begin{proof}}
\newcommand{\ep}{\end{proof}}
\newcommand{\bprop}{\begin{prop}}
\newcommand{\eprop}{\end{prop}}
\newcommand{\bl}{\begin{lemma}}
\newcommand{\el}{\end{lemma}}
\newcommand{\bc}{\begin{cor}}
\newcommand{\ec}{\end{cor}}
\newcommand{\bd}{\begin{defn}}
\newcommand{\ed}{\end{defn}}

\newcommand{\vep}{\varepsilon}

\newcommand{\tx}{\textrm}

\addbibresource{ref.bib}

\begin{document}

\title[Topological approaches to knotted electric charge distributions]{Topological approaches to knotted electric charge distributions}
\author[Max Lipton]{Max Lipton}
\address{Department of Mathematics\br
Cornell University\br
Ithaca NY, 14853\br
USA}
\email{ml2437@cornell.edu}

\thanks{This article was partially funded by a NSF Research Training Group grant titled Dynamics, Probability, and PDEs in Pure and Applied Mathematics, DMS-1645643.}

\subjclass{31C12, 37C25, 57K10}
\keywords{Physical knot theory, electrostatics, Morse theory, potential theory, Laplace equation, geometric topology, Cerf theory}

\date{\today}

\begin{abstract}
Consider a knot $K$ in $S^3$ with uniformly distributed electric charge. Whilst solutions to the Laplace equation in terms of Dirichlet integrals are readily available, it is still of theoretical and physical interest to understand the qualitative behavior of the potential, particularly with respect to critical points and equipotential surfaces. In this paper, we demonstrate how techniques from geometric topology can yield novel insights from the perspective of electrostatics. Specifically, we show that when the knot is sufficiently close to a planar projection, we prove a lower bound on the size of the critical set based on the projection's crossings, improving a 2019 result of the author. We then classify the equipotential surfaces of a charged knot distribution by tracking how the topology of the knot complement restricts the Morse surgeries associated to the critical points of the potential.
\end{abstract}

\maketitle
\section{Introduction}
Imagine a closed knotted wire in space fixed in place, with uniform electric charge distributed on it.  Our novel problem of interest is to understand the properties of the knot's electric field, a topic with applications to the study of knotted DNA molecules in viruses \cite{orl17}, the design of electrical circuits \cite{gp05}, and broader questions in physical knot theory \cite{kauf91}. In our work, the knot is fixed in space. By contrast, others have studied how knotted curves deform, as discussed in the extensive literature on the energy methods and flows of knots \cite{buck95, freedman94}.

The electric field around a charged knot is the negative gradient of a potential function, so the fixed points of the electric field correspond to critical points of the potential. The primary object of study we will focus on are the critical points of a knot which are structurally stable under small perturbations of the charge distribution. Physically, these are the points where all of the electric forces cancel. Thus some basic questions arise: How many critical points are there? What are their local behaviors? And how does the critical set relate to topological properties of the knot? 

In this paper, we present some results which address these broad questions through the use of techniques from geometric topology, and offer some rigorous proofs for some of the results communicated in \cite{lst22}. Whilst the methods we apply are standard in low-dimensional topology, research in electrostatics and and physical knot theory rarely implement these methods.  We use a topological counting argument to show that the size of a knot's structurally stable critical set sharpens from a previously proven lower bound based on the knot's tunnel number \cite{lipton}. Next, we calculate the effect of the Morse surgery on their genera. A critical point of Morse index $1$ corresponds to an increase in genus as we raise the level of the potential, whilst a critical point of index $2$ corresponds to a decrease in genus. The tuple of equpotential surface genera is a Scharlemann-Thompson handlebody width decomposition of the knot complement \cite{st94}, but our tuple differs from the the classical thin decomposition in that we exclusively consider those arising from potentials, and we do not lexicographically reorder the tuples.

Let $K \subseteq \R^3 \subseteq S^3$ be a smooth knot parametrized by the curve $r(t),$ $t \in [0,2\pi]$ with $r(0) = r(2\pi)$. We will take the convention that $S^3$ is the union of $\R^3$ and a single compactifying point at infinity. Suppose $K$ is endowed with a uniform charge distribution. With a choice of units, the electric potential between a point $k \in K$ and a point charge at $x$ at a distance $R$ from $k$ is proportional to $R^{-1}$ by Coulomb's Law. It therefore makes sense to define the electric potential $\Phi: S^3 - K \to \R$, on the complement of $K$ by the line integral 

\noindent
\begin{equation}
\Phi(x) = \int_{k \in K} \frac{dk}{|x - k|} = \int_0^{2\pi} \frac{|r'(t)|}{|x - r(t)|}dt, \tx{ } x \in \R^3 - K.
\label{this}
\end{equation}

We set $\Phi(\infty) = 0$ to ensure smoothness. By differentiating under the integral sign with respect to $x$, we can see the electric potential is smooth and harmonic. The electric field is defined by $E = - \nabla \Phi$. The critical points of $\Phi$ represent equilibrium points where a charged test particle at rest will continue to experience no electric force from the charge distribution. Some conventions define the potential to be negative, in order for the electric field to point towards the knot, but it is more convenient for our purposes to work with a nonnegative potential function. Typically charged wires are presented in physics literature as the integral over a solid, thin torus, where our idealized uniform charge distribution over a curve is approximated by a nonuniform charge distribution over a solid, which depends greatly on the geometry of the parametrization. However, our work is concerned with critical points which occur far away from the knot, so this distinction does not alter our results.

The electric potential is difficult to analyze in general, as integral formulas can be highly nonlinear and nonsymmetric, but the potential and electric field integrals can be numerically approximated via Gaussian quadrature, and the solutions to $E(x) = 0$ can be determined by the multivariable Newton method. Initial numerical approximations of the critical sets for various knots were implemented by Townsend and Lipton \cite{knotcode}, showing that there are isotopic parametrizations of the unknots which have critical sets of differing size. In particular, the trivial unknot embedding $r(t) = (\cos(t),\sin(t),0)$ has a single critical point in the origin, whilst the $(5,1)$ torus knot with parametrization $r(t) = (\cos(t + 2)\cos(5t), \cos(t+2)\sin(5t),-\sin(t))$ has three critical points, as depicted in Figure 1.

\begin{figure}[h]
\centerline{\includegraphics[width=2in]{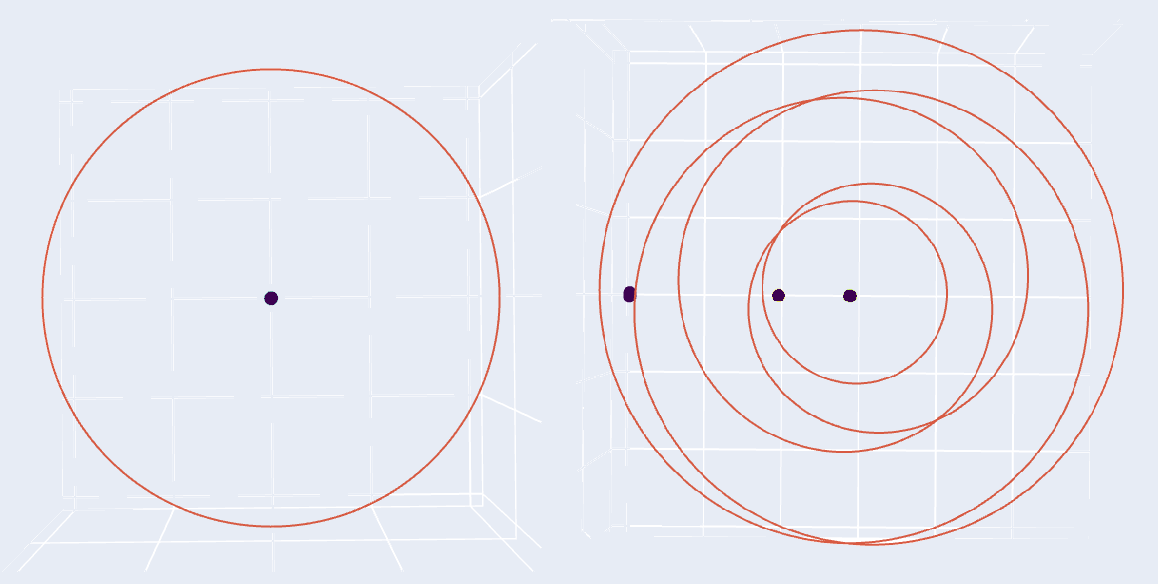}}
\caption{A top-down projection of two knots isotopic to the unknot with differing critical sets.}
\label{unknotcritset}
\end{figure}

It is unsurprising that the critical set is not invariant under isotopy, given the dependence of $\Phi$ on $r$, but a 2019 result \cite{lipton} proved there is indeed a restriction on the critical set based on a knot invariant called the tunnel number. Let $cp(K)$ denote the smallest size of structurally stable critical sets of $K$ over all parametrizations, which includes the critical point at infinity in the count. Let $t(K)$ denote the tunnel number, which is the smallest number of arcs one must add to $K$ such that the complement is a handlebody. The aforementioned result is stated as follows.
\begin{theorem}
\label{tunnel}
For all knots $K$, $cp(K) \geq 2t(K) + 2$.
\end{theorem}

Theorem \ref{tunnel} was proven using techniques in geometric topology, Morse theory, and stable manifold theory. The proof does not apply Morse reconstruction to $\Phi$ itself, but rather, it applies the Morse Rearrangement Lemma (see Lemma 2.4.12 of \cite{nico}) to $\Phi$, which allows us to alter the global, long-term dynamical behavior of the gradient to conform to certain convenient properties, whilst leaving the local behavior around the critical points unchanged. 

Critical sets of harmonic functions can be pathological in general, but the conditions we impose avoid these edge cases, and so the critical sets under consideration will always be finite \cite{sh80} and lie in the convex hull of the charge distribution \cite{walsh48}, which we assume is bounded. The inclusion of the structurally stable requirement in our definition of $cp(K)$ is important, because as we will discuss in Section 2, generic knot isotopies (under the $C^\infty$ topolgy) can induce bifurcations which create or annihilate pairs of critical points, with a structurally unstable degenerate critical point of high multiplicity occuring during the transition. Demanding that the potential is Morse avoids undercounting the critical points. 

In Section 3, we prove a partial sharpening of Theorem \ref{tunnel}. The tunnel number is a poor measure of a knot's complexity. For instance, all torus knots have tunnel number $1$, even though they can have arbitrarily high crossing number. Likewise, prime knots with up to $10$ crossings can only have tunnel numbers $1$ or $2$ \cite{mori96}. However, we can show the bound sharpens when the knot is sufficiently close to a planar curve, defined by the notion of a $\delta$-lifting which is given later on. We then show that given a planar knot diagram with $c$ crossings, there is a sufficiently small $\delta > 0$ such that the size of the critical set for its $\delta$-liftings is at least $2c+2$. This result is proven by applying the Poincare-Hopf Index Theorem to the electric field of the projected knot to prove the existence of a critical point in each connected component in the projection's planar complement, of which there are $c+2$, including the critical point at infinity which corresponds to the unbounded component. We then use the Morse inequalities to show that lifting each crossing from the projection necessitates the creation of another critical point due to the change in the topology of the knot complement, which yields our final count of $2c+2$.

In Section 4, we prove a result describing the topology of the equipotential surfaces in terms of the critical set. The critical values of $\Phi$ partition the positive real numbers into finitely many intervals, and a key idea from Morse theory states that the level sets of two values from the same interval are homeomorphic. This leads us to the central object of the section: the Morse code of a knot, which is a tuple of numbers listing the genera of the equipotential surfaces corresponding to each of the intervals. Another key idea from Morse theory states the passage from one equipotential surface to another is determined, up to homotopy equivalence, by the attachment of a disc of appropriate dimension. Even though the attachment maps can be quite pathological in general, we prove that the potential yields particularly simple surgeries, showing that a critical point of index $1$ corresponds to an increase in the subsequent term of the Morse code, whilst a critical point of index $2$ corresponds to a decrease.

\section{Bifurcations through degenerate critical points}

Recall that for a a manifold $M$, a Morse function $f: M \to \R$ is a smooth, real-valued function whose critical points are nondegenerate, which means the Hessians at the critical points are nonsingular. Throughout this work, we assume the electric potential is Morse, which is a generic property in the space of real-valued functions on the knot complement \cite{milnor,nico}. A knot isotopy induces a path in this space via the corresponding potentials, but the path could pass through non-Morse functions. The proofs of our theorems make liberal use of the Morse property, and break down when $\Phi$ is not Morse.

However, using Cerf theory, we can extract a generic property of knot isotopies in which the path through non-Morse potentials occurs only finitely many times, and their degenerate critical points can be classified based on partial derivatives of the isotopy. We will see that these degenerate critical points correspond to saddle-node bifurcations. It should be emphasized that this result does not describe the dynamics of a moving charged knot, as we do not take into account the induced magnetic forces from the moving charges. Rather, we are describing a bifurcation among a smooth family of possible fixed, rigid knot configurations.

We now state a part of the Cerf Structure Theorem in terms of the homotopies of Morse functions. There is a more general structure theorem given by a stratification \cite{nico,freed} of the class of isotopies via the coordinate expansion of general degenerate critical points. A full exposition can be found in Ch. 23 of Freed \cite{freed}.

\begin{theorem}
\label{cerf}
Let $\tilde{F}: M \times [0,1] \to \R$ be a one parameter family of smooth functions on an $n-$manifold $M$ indexed by the variable $s$. We can replace $\tilde{F}$ with a substitute function (which we still call $\tilde{F})$ in a dense subspace of $C^\infty(M \times [0,1])$ which satisfies the following. Each $F_s = \tilde{F}(-, s)$ is Morse except for finitely many $s$ where there could exist degenerate critical points. For each such $s_0$, and for each degenerate critical point $p$ of $F_{s_0}$, there exist local coordinates $(x_1, \dots, x_n, s)$ of $M \times [0,1]$ centered at $(p,s_0)$ such that $\tilde{F}(x_1, \dots, x_n,s) = x_1^3 + \vep_1 sx_1 + \vep_2 x_2^2 + \dots + \vep_n x_n^2 + C$, where each $\vep_i \in \{\pm 1\}$, whose values depend on the partial derivatives of $\tilde{F}$, and $C$ is the degenerate critical value.
\end{theorem}

We will apply this theorem to the one parameter family of electric potentials induced by a knot isotopy. Suppose $K_0$ and $K_1$ are two smooth isotopic knots where the isotopy is parametrized by $r:[0,2\pi] \times [0,1] \to \R^3$. For fixed $s_0 \in [0,1]$, we will say $r_{s_0} = r(-,s_0)$ is the parametrization of the knot $K_{s_0}$, whose electric potential is $\Phi_{s_0}$. We will now apply Theorem \ref{cerf} to the one parameter family $\Phi_s$, and if necessary, replace $\Phi_s$ with another nearby generic family so that the property mentioned in the theorem holds.

We will label the coordinates from Theorem \ref{cerf} as $(x,y,z,s)$ so we have that the local coordinate formula of $\Phi$ at a specific $s_0$ where $\Phi_{s_0}$ has a degenerate critical point is $\Phi(x,y,z,s) = x^3 + \vep_1sx + \vep_2y^2 + \vep_3z^2 + C$. The convention for a knot isotopy is  to use a globally defined $s$ ranging from $0$ to $1$, but in our current coordinate chart, the local $s$-coordinate is centered at zero, and can be positive or negative. 

We can classify degenerate critical points of the electric potential as bifurcations according to the signs of $\vep_1,\vep_2$, and $\vep_3$. In these coordinates, the gradient and Hessian (with respect to the space coordinates alone) are 
\begin{align}
\nabla\Phi_s (x,y,z) &= (3x^2 + \vep_1sx, 2\vep_2y, 2\vep_3z) \\
H\Phi_s (x,y,z) &= \begin{bmatrix} 6x & 0 & 0 \\ 0 & 2\vep_2 & 0 \\ 0 & 0 & 2\vep_3
\end{bmatrix}.
\end{align}
For fixed $s \neq 0$, $\Phi_s$ must be Morse, and within this chart, $\Phi_s$ has critical points at $x = \pm \sqrt{\frac{-\vep_1s}{3}}, y = 0, z = 0$, which means there are either two critical points when $\vep_1$ and $s$ differ in sign, or no critical points when $\vep_1$ and $s$ are of the same sign. Furthermore, for a fixed $s$ where there are two critical points, they are of differing indices of $1$ and $2$, as the two $x$-values of the critical points differ in sign. We call these two critical points a critical pair.

Not every possible configuration of signs for the $\vep_i$ will result from a potential homotopy induced from a knot isotopy, as each $\Phi_s$ is harmonic and hence cannot contain local maxima or minima. For instance, when $\vep_2 = \vep_3 = -1$, the Hessian formula implies the existence of a critical point of index $3$ when $\tx{sgn}(s) = -\tx{sgn}(\vep_1)$, which cannot exist. This classification is summarized in Figure \ref{table}.

\begin{figure}
 \begin{tabular}{||c c c c||} 
 \hline
 $\vep_1$ & $\vep_2$ & $\vep_3$ & Bifurcation for increasing s\\ 
 \hline\hline
 $1$ & $1$ & $1$ & Impossible \\
 \hline
 $1$ & $1$ & $-1$ & Destruction of critical pair \\
 \hline
  $1$ & $-1$ & $1$ & Destruction of critical pair \\
 \hline
  $1$ & $-1$ & $-1$ & Impossible \\
 \hline
  $-1$ & $1$ & $1$ & Impossible \\
 \hline
  $-1$ & $1$ & $-1$ & Creation of critical pair \\
 \hline
  $-1$ & $-1$ & $1$ & Creation of critical pair \\
 \hline
  $-1$ & $-1$ & $-1$ & Impossible \\
 \hline
\end{tabular}
\caption{The generic classification of bifurcations of critical points of the potential induced by knot isotopy.}
\label{table}
\end{figure}

\begin{remark}
Symmetries in the parametrization and isotopy of $K$ can result in non-saddle-node bifurcations, as a pitchfork bifurcation occurs in the numerical implementation of the ``flattening" isotopy of a Figure 8 knot, as shown in joint work by the author, Townsend and Strogatz \cite{lst22}. We considered the critical set of the Figure-8 knot parametrized by $r(t) = ((2 + \cos{2t})\cos{3t}, (2 + \cos{2t})\sin{3t},\sin{4t})$, and observed the bifurcation as the knot ``flattens" by scaling the third coordinate function by a factor of $s$, as $s$ varies from $1$ to $0$. At approximately $s = 0.41$, we saw the annihilation of a pair of critical points at the location of a third critical point which persists. Nevertheless, in order to preserve the topological restrictions on the critical set, as explained in the next section, changes in the critical set must still take the form of creation and destruction of pairs.

\end{remark}

\section{A partial sharpening of the tunnel number bound}

Recall that the index of a critical point is the number of negative eigenvalues of its corresponding Hessian, which is invariant under a change of local coordinates. If $m_i$ denotes the number of critical points of index $i$, then the Morse inequalities imply the alternating sum 
\begin{equation}
\sum\limits_{i=0}^{\tx{dim} M} (-1)^i m_i = \chi(M),
\end{equation}
with $\chi$ denoting the Euler characteristic \cite{nico}.

Now for a given knot $K$, with electric potential $\Phi$, we will let $m_i$ denote the number of critical points of $\Phi$ with index $i$. As $\Phi$ is harmonic, we have that there are no local extrema save for the point at infinity, which is a global minimum. As the Euler characteristic of any knot complement is $0$, we get the following lemma.

\begin{lemma}
\label{m2m1}
Regardless of the knot type of $K$, $m_2 = m_1 - 1$.
\end{lemma}

We now define the notion of a $\delta$-lifting of a planar curve. The electric field of a uniformly charged planar curve is straightforward to describe, and if $K$ is a $\delta$-lifting of a planar curve for sufficiently small $\delta$, then the critical points of $K$ will include those of the projection, in addition to new critical points obtained at each crossing.

\begin{definition} Let $\delta > 0$ and $\gamma$ be a closed curve in the plane $P \subseteq \R^3$. We say the knot $K$ is a \textbf{$\delta$-lifting of $\gamma$} if we can obtain $K$ by taking a ball $B$ of radius $\delta$ centered at each crossing of $\gamma$ and replacing $\gamma \cap B$ with two disjoint arcs contained in $B$ whose end points are in $\gamma \cap \partial B$.
\end{definition}

We now come to the main theorem of this section.

\begin{theorem}
\label{knotflattening}
Let $\tilde{K}$ be a knot type, and suppose $\gamma$ is a planar projection of some tame parametrization of type $\tilde{K}$ with $c$ crossings. Then there exists a $\delta > 0$ such that if $K$ is a $\delta$-lifting of $\gamma$ with knot type $\tilde{K}$, then the electric potential of $K$ (after a possible perturbation) has at least $2c + 2$ critical points, including the critical point at infinity.
\end{theorem}

\begin{remark}
 As $c \geq t(K)$, this bound is a sharpening of Theorem \ref{tunnel}.
\end{remark}

The proof of Theorem \label{knotflattening} requires a short, elementary lemma.

\begin{lemma}
\label{eulerknot}
There are $c+1$ bounded connected components of the planar complement of $\gamma$.
\end{lemma}

\begin{proof}
Consider $\gamma$ as a self-looped graph with $c$ vertices and $e$ edges. This yields a cellular decomposition of $\R^2 \cup \{\infty\}$ with, say $g+1$ faces, one of which is unbounded and contains the point at infinity. Hence, $(g+1) - e + c = \chi(S^2) = 2$. Likewise, $\gamma$ also gives a cellular decomposition of $S^1$ with $e$ edges and $2c$ vertices, as each crossing represents two vertices. Thus we also have $-e + 2c = \chi(S^1) = 0$. Rearranging this system of equations gives $g = c+1$.
\end{proof}

We can now prove Theorem \ref{knotflattening}.

\begin{proof}
Without loss of generality, we can assume $\gamma$ lies on the $xy$-plane. Let $\hat{\Phi}$ be the electric potential of $\gamma$, defined on $S^3 - \gamma$, and let $M_i$ denote the number of critical points of $\hat{\Phi}$ of index $i$. Since $\gamma$ is a plane curve, the electric field passes to a vector field on the plane, whose zeros are in bijective correspondence with the critical points of $\hat{\Phi}$. As the electric field points towards $\gamma$ near the curve, by the Poincare-Hopf Index Theorem, we can deduce each bounded planar component has at least one source or sink zero by taking a loop sufficiently close to the boundary of each said component. These zeros must be sources, because a sink would correspond to a local maximum of $\hat{\Phi}$, which cannot occur because $\hat{\Phi}$ is harmonic. These sources correspond to critical points of index $1$ in $S^3 - \gamma$, so applying Lemma \ref{eulerknot}, we see that $M_1 \geq c + 1$. 

Now take $\delta > 0$ sufficiently small such that by taking a $\delta$-lifting $K$ having the prescribed knot type with electric potential $\Phi$, the gradient vector field $\nabla \Phi$ is topologically equivalent to $\nabla \hat{\Phi}$ around sufficiently small neighborhoods each of the aforementioned critical points of index $1$. This is possible due to the dense structural stability of gradient vector fields, albeit we may have to apply an arbitrarily small perturbation to $K$ \cite{palis83}.

We will now use $m_i$ to denote the critical points of $\Phi$ with index $i$. By structural stability, we have that $m_1 \geq M_1$. Note that the $\delta$-lifting could introduce new critical points of index $1$ within the balls, which is why we have an inequality instead of strict equality. Hence, by Lemma \ref{m2m1}, we get $m_2 = m_1 - 1 \geq c$. Adding up the $m_i$, we get that the total critical set of $\Phi$ includes at least $2c + 2$ critical points.
\end{proof}
\begin{remark}
By repeatedly applying Reidemeister I moves to a given knot diagram, we can realize parametrizations with arbitrarily large critical sets for any given knot type.
\end{remark}
This partial sharpening does not improve the bound on $cp(K)$, as this would require us to realize a knot parametrization with precisely $2c+2$ critical points. Even in the potential of a planar curve, a connected planar component can have sources as well as saddles. For instance, the planar curve defined by $r(t)= ((2+1.5\cos{2t})\cos{t}, (2+1.5\cos{2t})\sin{t})$ has a saddle at the origin with one source in each of the two leaves.

\section{The relationship between the critical set and the Morse code of equipotential surfaces}
In this section, we define the notion of a ``Morse code" of an embedded knot. The Morse code lists the genera of the equipotential surfaces in the order with which they appear as one increases the potential from zero to infinity. Calculating the Morse code rigorously is difficult in general, but we can empirically observe the Morse code through computer visualization by applying the Marching Cubes algorithm on the potential \cite{knotcode}, as seen in Figure 2. 

Consider $\tx{Crit}(\Phi)$ and label the points $p_0, p_1, \dots, p_N$ such that $p_0 = \infty$, and $\Phi(p_i) \leq \Phi(p_j)$ whenever $i \leq j$, and let $V_0 = 0 < V_1 < \dots < V_{N'}$ be the distinct critical values of $\Phi$. 
\begin{definition}
 A critical set is \textbf{distinct} when each of the critical values $f(p_0), \dots, f(p_N)$ are distinct.
\end{definition}
Note that unless the critical set is distinct, we do not necessarily have that $\Phi(p_i) = V_i$ for all $i$. Let $G_i$ be the genus of the surface $S_i = \Phi^{-1}(V_i + \vep)$ where $\vep > 0$ is chosen such that $V_i + \vep_0$ is a regular value for each $0 < \vep_0 \leq \vep$. Because there are only finitely many $i$, a single $\vep$ can be chosen to work for all $i$. 

By the implicit function theorem, each $S_i$ is a smooth, compact, orientable surface without boundary embedded in $\R^3$. By the Morse Reconstruction Lemma \cite{nico} the topology of each $S_i$ is fixed for properly chosen $\vep$, and thus each $G_i$ is well-defined.
\begin{definition}
 The \textbf{Morse code} of $K$ is the $N'$-tuple $(G_0, G_1, \dots, G_{N'})$. A Morse code is \textbf{distinct} when the critical set is distinct.
\end{definition}
\begin{remark}
  Clearly, for a distinct critical set, $N = N'$. 
\end{remark}
\begin{figure}[h]
\centerline{\includegraphics[width=4.5in]{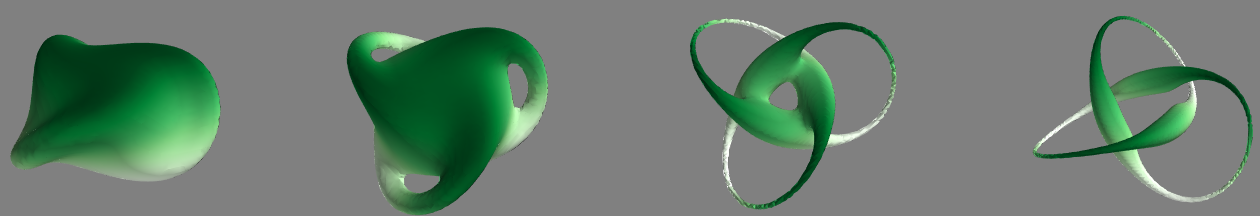}}
\caption{The four surfaces of the Morse code for a trefoil knot with order $3$ symmetry. We can read off the nondistinct Morse code as $(0,3,4,1).$}
\label{trefcode}
\end{figure}
\begin{remark} Not all critical sets and Morse codes are distinct, as can be seen in the example of the trefoil with order $3$ rotational symmetry. Distinctness is a generic property, however. By introducing a perturbation in the knot, we can change a nondistinct Morse code into a distinct Morse code.
\end{remark}.
\begin{lemma}
\label{codeends}
All Morse codes begin with $0$ and end with $1$.
\end{lemma}
\begin{proof}
The first statement follows from a multipole expansion sufficiently far away from the origin. The second statement follows from the fact that the electric field is approximately normal to the charge distribution in a sufficiently small neighborhood. See Chapters 2.5 and 3.4 of Griffiths \cite{griffiths}.
\end{proof}
Intuitively, this means the equipotential surface for a very small voltage level will be a large topological spheroid surrounding the knot, and the equipotential surface for a very high voltage level will be a thin tube surrounding the knot.

\begin{example} The most basic example of a Morse code comes from the trivial embedding of the unknot, defined by 
$r(t) = (\cos{t},\sin{t},0)$. We can immediately see that any finite critical point must lie on the $z=0$ plane, as otherwise the $z$-coordinate of the electric field will be the integral of a continuous strictly nonzero function divided by a strictly positive function, which is nonzero. We can then see that the origin is a critical point by symmetry, as the force exerted on the origin by any point on the knot is cancelled by an opposing force on the point reflected through the origin. It is a routine calculation to see that there are no other critical points on the plane, as seen in Zypman \cite{zypman}. 

This critical point has index $1$, as a perturbation in the $z=0$ plane will send a test charge towards the knot, whilst a perturbation along the $z$-axis will send a test charge back towards the origin, as all attracting charge is concentrated in the $z=0$ plane. As for the Morse code, there are only two topologically inequivalent equipotential surfaces, so by Lemma \ref{codeends}, the Morse code is $(0,1)$.
\end{example}

We now come to the main theorem of this section.

\begin{theorem}
\label{morsecode}
Suppose $(G_0, \dots, G_N)$ is the distinct Morse code of $K$.

\begin{enumerate}
    \item If $p_i$ has index $1$, then $G_i = G_{i-1} + 1$.
    \item If $p_i$ has index $2$, then $G_i = G_{i-1} - 1$.
\end{enumerate}
\end{theorem}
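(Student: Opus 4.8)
The plan is to analyze how the topology of the level surface $S_i = \Phi^{-1}(V_i + \varepsilon)$ changes as we sweep the regular value down past the single critical value $V_i = \Phi(p_i)$ to $V_{i-1} + \varepsilon$. Since the Morse code is distinct, exactly one critical point $p_i$ lies between these two regular values, so I can isolate the effect of a single critical point. The central tool is the classical Morse-theoretic handle attachment lemma: as we pass a critical point of index $\lambda$, the sublevel set changes by attaching a $\lambda$-handle, and correspondingly the level surface undergoes surgery. My orientation convention is that $\Phi$ is large near the knot and decays to $0$ at infinity, so decreasing the potential level enlarges the region $\{\Phi \geq c\}$ enclosing $K$; I would first fix this convention carefully, since the direction of the inequality determines whether a given index produces an increase or decrease in genus.

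First I would recall that each $S_i$ is a closed orientable surface embedded in $S^3$, hence a connected sum of tori whose genus $G_i$ is the single invariant of interest (after establishing connectivity, or else tracking the effect on total genus summed over components). Next, for a critical point $p_i$ of index $\lambda$, I would write the Morse normal form $\Phi = \Phi(p_i) - x_1^2 - \cdots - x_\lambda^2 + x_{\lambda+1}^2 + x_3^2$ in suitable local coordinates on the $3$-manifold, so the level sets near $p_i$ are governed by a standard quadric. The key step is then to translate the well-known effect on the sublevel set (attaching a handle $D^\lambda \times D^{3-\lambda}$) into the precise surgery performed on the $2$-dimensional level surface: an index $1$ critical point corresponds to attaching a $1$-handle to the sublevel region, which glues a tube onto the boundary surface and raises the genus by one, whereas an index $2$ critical point corresponds to a $2$-handle, which compresses a tube and lowers the genus by one. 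Because the surfaces are orientable, I can rule out the nonorientable $\mathbb{RP}^2$-summand degeneration that could otherwise occur, guaranteeing the genus changes by exactly $\pm 1$.

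I would carry out the two cases in parallel using the local model. For index $1$, the descending manifold is a $1$-disk and the effect on $\{\Phi \geq c\}$ as $c$ drops past $V_i$ is to connect two boundary regions (or add a handle to one region) by a tube; the Euler characteristic of the level surface drops by $2$, so the genus increases by $1$, giving $G_i = G_{i-1} + 1$. For index $2$, the ascending picture is dual: a tube in the level surface is capped off, the Euler characteristic rises by $2$, and the genus decreases by $1$, giving $G_i = G_{i-1} - 1$. I would make this rigorous by computing $\chi(S_i) - \chi(S_{i-1})$ directly from the local quadric model, using that away from a neighborhood of $p_i$ the two surfaces are ambient isotopic (the gradient flow provides the isotopy, since there are no other critical points in the intervening slab), and that connectivity is preserved by these surgeries in our setting.

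The main obstacle I anticipate is handling orientability and connectedness cleanly enough to conclude that the genus change is exactly $\pm 1$ rather than merely that $\chi$ changes by $\pm 2$. In principle an index $1$ surgery could either add a handle to a single component (raising genus by $1$) or merge two components (which would change the count of components, not the genus of a connected surface), and an index $2$ surgery could split a component or pinch off a sphere. The distinctness hypothesis and the fact that each $S_i$ is a single embedded surface in $S^3$ should prevent the problematic component-splitting cases, but I would need to argue this carefully, likely invoking the physical boundary conditions from the earlier lemma (namely $G_0 = 0$ and $G_{N'} = 1$) together with the index count $m_1 - m_2 = 1$ to confirm that the surgeries are always of the handle-adding or tube-compressing type that changes genus monotonically. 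Verifying that the surfaces remain connected throughout the sweep, using the harmonicity of $\Phi$ and the maximum principle to preclude spurious closed components, will be the delicate part.
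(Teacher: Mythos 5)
Your route is genuinely different from the paper's. You work locally: Morse normal form at $p_i$, surgery on the level surface, and an Euler characteristic count, concluding $\chi(S_i)-\chi(S_{i-1})=\mp 2$ and hence a genus change of $\pm 1$ \emph{provided} the number of components is unchanged. The paper instead works with the sublevel sets $M_i = \Phi^{-1}([0, V_i + \varepsilon])$ as handlebodies: for index $1$ it attaches a $1$-handle, giving $G_i = G_{i-1}+1$ directly; for index $2$ it attaches a $2$-disc along a loop $\gamma \subset S_{i-1}$, uses the surface group presentation to get $G_i \leq G_{i-1}$, rules out a null-homotopic $\gamma$ by Gauss's law (a split-off sphere component would have positive flux and hence enclose charge, contradicting connectedness of $K$), and then closes not with a local computation but with a global counting argument: there are $m_1$ increases of exactly $1$, $m_2 = m_1 - 1$ decreases each of size at least $1$, and the code runs from $G_0=0$ to $G_N=1$, so every decrease is exactly $1$. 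Your plan, if completed, is more self-contained at each critical point and avoids both the fundamental-group step and the endgame counting; the paper's version buys the index-$2$ conclusion cheaply from Lemma 2.

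The gap is the step you yourself defer to the final paragraph: ruling out changes in the number of components. For index $1$ this is harmless once $S_{i-1}$ is known to be connected, but for index $2$ you must show the surgery circle is non-separating in $S_{i-1}$; if it separates, $\chi$ still rises by $2$ but the surface splits into two pieces whose genera sum to $G_{i-1}$, and ``genus drops by $1$'' fails. The tools you propose for this --- the endpoint values $G_0 = 0$, $G_{N}=1$ and the count $m_1 - m_2 = 1$ --- cannot do it, since they carry no information about component counts. What actually closes the gap is the physical input you mention only in passing: every component of a regular level surface has strictly positive flux $\int |\nabla\Phi|\, dA$ and hence encloses charge by Gauss's law; two disjoint such components would have to be nested around the connected knot $K$, and the maximum principle applied to the charge-free harmonic region between them forces $\Phi$ to be constant there, a contradiction. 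This yields connectivity of every regular level surface, which in turn forbids the separating-circle case. As written, this is a stated intention rather than an argument, and it is exactly the point where the paper does its real work (there via the Jordan--Brouwer separation theorem and Gauss's law); you would need to carry it out explicitly for the proof to stand.
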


\begin{proof}
We start with the first statement. Suppose $p_i$ has index $1$. Let $M_i = \Phi^{-1}([0, \Phi(p_i) + \vep])$, which is a handlebody submanifold of $S^3$, and let $S_i = \partial M_i$, which as we saw, is a smooth, compact, orientable surface of genus $G_i$. By the Morse Reconstruction Lemma, $M_i$ is homotopy equivalent to $M_{i-1}$ with a one dimensional disc attached. Hence, by taking a tubular neighborhood in $S^3$, we can see it is homotopy equivalent to a handlebody homeomorphic to $M_{i-1}$ with another handle attached. Therefore, $G_i = G_{i-1} + 1$.

The second statement involves a somewhat roundabout argument. Suppose $p_i$ has index $2$. Then $M_i$ is homotopy equivalent to $M_{i-1}$ with a two dimensional disc attached. Let $M = \Phi^{-1}([0,V_i - \vep])$, where $\vep$ is sufficiently small such that $M$ is homeomorphic to $M_{i-1}$, which in turn implies $\partial M = S = \Phi^{-1}(V_i - \vep)$ is homeomorphic to $S_{i-1}$. Because $M_0 \subseteq M_1 \dots \subseteq M_N$, with the inclusions being strict by distinctness of the critical set, this disc is attached via an immersion $\gamma: S^1 \to S$, and we can say $\gamma \in \pi_1(S)$.

The fundamental group $\pi_1(S)$ is the free group on $2G_{i-1}$ generators modulo a product of $G_{i-1}$ commutators of paired generators. Observe that $\pi_1(S_i)$ is the quotient of $\pi_1(S)$ by a certain word $w$ which represents $\gamma$. Attaching a disc to a topological space quotients the original fundamental group by the attaching map, because the path around the disc is nullhomotopic in the new space. Hence, by the classification of compact orientable surfaces by their fundamental groups, $G_i \leq G_{i-1}$, since $\pi_1(S_i)$ does not have more generators than $\pi_1(S)$. We show this inequality is strict by proving $w$ is not the trivial word.

\begin{figure}[h]
\centerline{\includegraphics[width=3.5in]{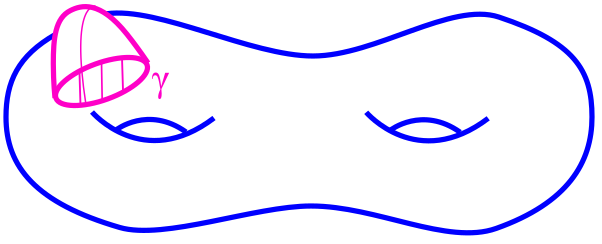}}
\caption{The attachment of a $2$-disc to $S_{i-1}$ (not necessarily of genus $2$) via a nullhomotopic loop. Note the attachment actually occurs in the interior of the solid surface, as the gradient points inwards in the equipotential surfaces.}
\end{figure}

Suppose $w$ is trivial. Then $\gamma$ bounds a disc on $S_{i-1}$. Note that the Morse Reconstruction Theorem states $\gamma$ does not self-intersect. By the construction of cell complexes, we also have that $\gamma$ bounds another embedded disc in $\R^3 - S_{i-1}$. See the above figure. The union of these two discs forms an embedded $2$-sphere $\tilde{S}$. By the Jordan-Brouwer separation theorem, we have that $S_{i-1} \cup \tilde{S}$ partitions $S^3$ into three disjoint connected components. 

By taking a tubular neighborhood and then taking the boundary, we can see that $S_i$ is a disjoint union of $S_{i-1}$ and a sphere $S^2$. The flux integral of the electric field over a connected component of an equipotential surface is strictly positive. Therefore, each of these equipotential surfaces encapsulates electric charge by Gauss' Law, but this contradicts the fact that our charge distribution, the knot $K$, is connected by assumption. Hence, $w$ is nontrivial and we can see that $G_i < G_{i-1}$.

Now we can the facts that $m_2 = m_1 - 1$ and that $m_1 + m_2 + 1 = N$ to show $G_i = G_{i-1} - 1$. There are $m_1$ instances of consecutive terms in the Morse code increasing by $1$ by the first statement of the theorem. Therefore, there are $m_1 - 1$ instances of consecutive terms of the Morse code decreasing by a strictly positive integer. Since the Morse code begins at $0$ and ends at $1$, we can deduce that each of these reductions must be a reduction by $1$.
\end{proof}

We now state a corollary on the structure of a Morse code. 

\begin{corollary}
The lowest nonzero critical value $V_1$ of a distinct Morse code cannot correspond to a critical point of index $2$. Therefore, $G_1 = 1$.
\end{corollary}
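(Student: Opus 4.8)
The plan is to derive this statement directly from the Main Theorem, Lemma 1, and the nonnegativity of the genus. First I would note that, by Lemma 1, the genus of the equipotential surface lying just above the potential value at infinity satisfies $G_0 = 0$. Since $p_0 = \infty$ is the unique critical point of index $3$ (by harmonicity of $\Phi$), the critical point $p_1$ realizing the lowest nonzero critical value $V_1$ must have index $1$ or $2$.

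Next I would rule out index $2$ by contradiction. If $p_1$ had index $2$, then part (2) of the Main Theorem applied with $i = 1$ would give $G_1 = G_0 - 1 = -1$. But $G_1$ is the genus of a compact orientable surface, hence a nonnegative integer, so this is impossible. The only subtlety to verify is that the reconstruction relation of the Main Theorem genuinely connects $G_1$ to $G_0$: this holds because there are no critical values strictly between $V_0 = 0$ and $V_1$, so $\Phi^{-1}(V_1 - \vep)$ is isotopic to $S_0$ and carries genus $G_0$. Thus $G_0$ is the correct ``$G_{i-1}$'' term when passing through $p_1$.

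Having eliminated index $2$, the point $p_1$ must have index $1$, and part (1) of the Main Theorem then yields $G_1 = G_0 + 1 = 1$, which is the desired conclusion.

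I do not expect a genuine obstacle here; the corollary is essentially immediate once the Main Theorem is established. The only point warranting care is confirming that the reference surface for the $i = 1$ reconstruction is precisely the genus-$G_0$ surface just above infinity rather than some other level set, and this is guaranteed by the ordering of the critical values together with the fact that $V_1$ is the smallest nonzero one.
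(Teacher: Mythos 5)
Your proof is correct, but it takes a different route from the paper's. The paper does not apply Theorem 1 numerically; instead it re-runs the key topological step from the proof of Theorem 1, part (2): if $p_1$ had index $2$, the $2$-disc would be attached to $S_0 \cong S^2$ along a loop that is automatically nullhomotopic (since $S^2$ is simply connected), and the argument there shows a nullhomotopic attaching map produces a disjoint spherical component of an equipotential surface, contradicting Gauss' Law for the connected charge distribution $K$. Your argument instead uses only the final arithmetic conclusion of Theorem 1: if $p_1$ had index $2$ then $G_1 = G_0 - 1 = -1$, impossible for the genus of a compact orientable surface. Both are valid and non-circular (the counting step that finishes part (2) uses only Lemmas 1 and 2, not the corollary), and your check that $S_0$ is the correct reference surface for the $i=1$ reconstruction is the right point to verify. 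What your version buys is brevity and the fact that it needs no further geometry once the theorem is in hand; what the paper's version buys is that it isolates \emph{why} index $2$ fails at the bottom of the Morse code --- the simple connectivity of the first equipotential sphere forces exactly the degenerate attaching map that the Gauss'-Law argument forbids --- and it only relies on the intermediate strict inequality $G_i < G_{i-1}$ rather than the full equality. One small caveat: strictly speaking a positive harmonic potential should make $\infty$ a minimum rather than an index-$3$ point, but you follow the paper's own convention here and nothing in either argument depends on it, since all that is needed is that $p_1$ has index $1$ or $2$.
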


\begin{proof}
If $V_1$ corresponds to a critical point of index $2$, we would have that $\Phi^{-1}([0,V_1 + \vep])$ is homotopic to a $2-$sphere with a $2-$disc attached. But as we saw in the proof of Theorem \ref{morsecode}, we would get a contradiction because the attaching map would be nullhomotopic, as $S^2$ is simply connected.
\end{proof}

\begin{remark} The proof of the main result from Lipton \cite{lipton} uses the Morse Rearrangement Lemma, which shuffles around the critical values whilst leaving the critical set and its index data unchanged. This means one can replace $\Phi$ with another Morse function whose first nonzero critical value corresponds to a critical point of index $2$. However, this does not contradict the preceding corollary because the replacement is not necessarily harmonic. Hence, the flux integrals of the electric field over the replacement's level surfaces are not necessarily strictly positive.
\end{remark}

As stated earlier, many knot embeddings do not have distinct Morse codes. However, there is a straightforward generalization of Theorem \ref{morsecode}, whose proof we omit. This theorem relies on the fact that critical points of harmonic functions are isolated, applying the method of proof from Theorem \ref{morsecode} simultaneously across all critical points of the same potential.

\begin{theorem}
Now assume $K$ does not necessarily have a distinct Morse code. Suppose $p_{1}, \dots, p_{m}, q_{1}, \dots, q_{n}$ be all of the critical points of $\Phi$ with the same critical value $V$, such that each $p_i$ has index $1$ and each $q_j$ has index $2$. If the preceding equipotential surface has genus $G$, then the successive equipotential surface has genus $G + m - n$. Therefore, the Morse code is of the form $(0, \dots, G, G + m - n, \dots, 1)$. We also have that when $V = V_1$, $m > n$.
\end{theorem}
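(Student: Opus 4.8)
The plan is to reduce the general (non-distinct) case to the distinct case already handled by Theorem 1, using a perturbation argument together with the fact that critical points of harmonic functions are isolated. Since the $p_i$ and $q_j$ all share the same critical value $V$, they are isolated critical points, so each lives in its own small coordinate neighborhood on which the Morse lemma applies. The key idea is that passing through the single degenerate-looking level $V$ can be analyzed by pushing the critical values apart: I would perturb the knot embedding (as the excerpt notes, a symmetry-breaking perturbation turns a non-distinct Morse code into a distinct one) so that the shared value $V$ splits into $m+n$ nearby distinct critical values, $m$ of them of index $1$ and $n$ of index $2$, while leaving the equipotential surfaces just below and just above the cluster unchanged up to homeomorphism.

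Having done this, the steps proceed as follows. First I would fix $\varepsilon$ small enough that $V - \varepsilon$ and $V + \varepsilon$ are regular values and the only critical points in $\Phi^{-1}([V-\varepsilon, V+\varepsilon])$ are exactly $p_1, \dots, p_m, q_1, \dots, q_n$; isolatedness of critical points of harmonic functions guarantees such an $\varepsilon$ exists. Let $G$ be the genus of $\Phi^{-1}(V-\varepsilon)$. Second, I would apply the perturbed, distinct version: by Theorem 1, crossing each index-$1$ critical value raises the genus by exactly $1$ and crossing each index-$2$ critical value lowers it by exactly $1$. Since crossings are independent and local, the net change in genus across the whole cluster is $m - n$, so the successive surface $\Phi^{-1}(V+\varepsilon)$ has genus $G + m - n$. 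Third, I would confirm that the perturbation can be taken small enough that the surfaces at $V \pm \varepsilon$ retain their genera, so the genus count $G \mapsto G + m - n$ is intrinsic to the original embedding and not an artifact of the perturbation; this follows from the Morse Reconstruction Lemma applied away from the critical cluster. This yields the stated Morse code form $(0, \dots, G, G+m-n, \dots, 1)$.

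For the final claim that $m > n$ when $V = V_1$, I would argue exactly as in the Corollary. At the lowest nonzero critical value, the preceding surface $\Phi^{-1}(V_1 - \varepsilon)$ is a topological sphere of genus $G = 0$. An index-$2$ critical point attaches a $2$-disc along a loop $\gamma$ on this surface; but $S^2$ is simply connected, so the attaching word $w$ is necessarily trivial. As in Theorem 1, a trivial $w$ forces $\gamma$ to bound discs on both sides, producing an embedded $2$-sphere that by Gauss' Law would have to enclose a disconnected piece of the charge distribution, contradicting the connectedness of $K$. Hence no index-$2$ attachment can occur first while the genus is still $0$; every index-$2$ crossing must be preceded by enough index-$1$ crossings to raise the genus. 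Since the genus must remain nonnegative throughout and the cluster at $V_1$ starts from genus $0$, we cannot have $n \geq m$, forcing $m > n$.

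I expect the main obstacle to be making the perturbation argument fully rigorous, in particular verifying that a single small perturbation of the knot simultaneously (i) separates all $m + n$ coincident critical values into distinct ones, (ii) preserves the index of each critical point, and (iii) does not create or destroy critical points in the relevant potential band. Establishing (ii) and (iii) requires the nondegeneracy and isolatedness afforded by harmonicity, together with continuity of the critical data under smooth variation of the embedding; the subtlety is that harmonicity is not preserved under arbitrary perturbations of $\Phi$, so the perturbation must be carried out at the level of the knot $K$ rather than the function, ensuring the perturbed potential remains harmonic and the flux-based arguments of Theorem 1 continue to apply.
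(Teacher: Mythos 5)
First, note that the paper explicitly omits the proof of this theorem, offering only the hint that it ``relies on the fact that critical points of harmonic functions are isolated,'' so there is no official argument to compare yours against. Judged on its own terms, your proposal has two genuine gaps. The first is the perturbation step, which you correctly flag as the main obstacle but do not close: to keep the potential harmonic you must perturb the knot $K$ itself, and it is not automatic that a small perturbation of $K$ both splits the $m+n$ coincident critical values into distinct ones and avoids merging or creating critical points elsewhere. Stability of nondegenerate critical points under $C^2$-small perturbation does preserve the indices, but the splitting claim is a genericity statement inside the very constrained family of knot potentials, and neither you nor the paper proves it. The cleaner route --- the one the paper's hint points to --- avoids perturbation entirely: since the $p_i, q_j$ are isolated and nondegenerate, standard Morse theory for several critical points on a single level says $\Phi^{-1}([0, V+\varepsilon])$ is obtained from $\Phi^{-1}([0, V-\varepsilon])$ by attaching $m$ one-cells and $n$ two-cells \emph{simultaneously}, with disjoint attaching regions in the single surface $\Phi^{-1}(V-\varepsilon)$. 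Each one-handle raises the genus by one exactly as in part (1) of Theorem 1; each two-cell is attached along a curve that must be essential by the Gauss-law argument of part (2), and the global count $m_1 - m_2 = 1$ together with $G_0 = 0$ and $G_{N'} = 1$ forces each such compression to lower the genus by exactly one. The net change is $m - n$ with no perturbation needed.

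The second gap is the final claim. Your argument shows that an index-$2$ crossing cannot occur while the running genus is $0$, which rules out $n > m$ but not $n = m$: a perturbed sequence alternating index $1$, index $2$, index $1$, index $2$, and so on keeps the genus nonnegative, never attaches a two-cell to a sphere, and ends at genus $0$, so nonnegativity plus the first-crossing observation does not force $m > n$. The simultaneous-attachment picture settles this immediately: at $V = V_1$ every two-cell is attached along a circle in $\Phi^{-1}(V_1 - \varepsilon)$, which is the sphere $S_0$; such a circle is nullhomotopic in $S^2$, and the paper's Gauss-law contradiction then shows no index-$2$ critical point can occur at $V_1$ at all, i.e.\ $n = 0$, whence $m \geq 1 > n$ since the level contains at least one critical point. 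This is stronger than the stated conclusion and is unavailable in your perturbed picture, where the two-cells may be attached to surfaces of positive genus.
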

\section{Concluding Remarks and Future Directions}
There are still unresolved questions in the potential theory of knot complements, and the ramifications to broader research on knot energies and flows have yet to be explored, particularly the bifurcation of critical points induced from these specific flows. Whilst degenerate critical points have little physical significance, the collapse of the critical set to a size strictly below $cp(K)$ would likely yield new insights into the symmetries of certain knot classes.

Experimental mathematics can prove to be a potent tool to advance the theory as well, should some issues with computational expense be resolved. Figures \ref{unknotcritset} and \ref{trefcode} were able to be generated with minimal difficulty, but producing similar images for more complex knots will require significant computational resources. The author was able to reduce computation time by significant margins by implementing multicore parallel processing to divide the multivariable Newton method into smaller steps. However, there are further optimizations that have yet to be implemented, such as the use of the Fast Multipole Method in the evaluation of the potential integrals. Should these issues be resolved, we could produce experimental data for a larger class of knots, and formulate conjectures for some of the open questions in electrostatic knot theory.

\section{Acknowledgements}

The author would like to thank his advisor, Steven Strogatz, for introducing the problem to the author and for his helpful comments on this article. He would also like to thank Alex Townsend for his assistance with numerics and visualization. Finally, the author would like to thank the staff of Cornell's IT Department for their technical assistance.

\printbibliography

\end{document}